%
%
 
\documentclass{article}
 
\usepackage{amsmath,amssymb,amsfonts}
\usepackage{pstricks, pst-node}
\usepackage{bonn2007def}
\usepackage[all]{xy}
\usepackage[dvips]{graphics}



\begin{document}

\title{From quantum electrodynamics to posets of planar binary trees}
 
\author{
Fr\'ed\'eric Chapoton and Alessandra Frabetti \\
Universit{\'e} de Lyon ;\\
Universit{\'e} Lyon 1 ;\\
CNRS, UMR5208, Institut Camille Jordan,\\
43 blvd du 11 novembre 1918,\\
F-69622 Villeurbanne-Cedex, France\\
{\tt chapoton@math.univ-lyon1.fr, frabetti@math.univ-lyon1.fr}}
\date{\today}
 
\maketitle


\begin{abstract}
This paper is a brief mathematical excursion which starts from 
quantum electrodynamics and leads to the M\"obius function of the 
Tamari lattice of planar binary trees, within the framework of groups 
of tree-expanded series. 

First we recall Brouder's expansion of the photon and the electron Green's 
functions on planar binary trees, before and after the renormalization. 
Then we recall the structure of Connes and Kreimer's Hopf algebra of 
renormalization in the context of planar binary trees, and of their dual 
group of tree-expanded series. Finally we show that the M\"obius function 
of the Tamari posets of planar binary trees gives rise to a particular 
series in this group. 
\end{abstract}


\section*{Introduction}

Planar binary trees are among the most classical objects in combinatorics. 
Being counted by the Catalan numbers, they are in bijection with more than one hundred other combinatorial objects, cf.\cite{Stanley2}, 
such as noncrossing partitions and Dyck paths. 
In the last fifteen years, they have begun to play a key role in some 
algebraic structures, ranging from groups to Hopf algebras and to
operads, cf. \cite{LodayDendriform,Hivert3,Aguiar_Sottile}.
In this article, we would like to show how a problem in quantum
electrodynamics can lead to such algebraic structures. 

As shown by Christian Brouder in \cite{Brouder}, in quantum electrodynamics 
the Green functions can be described perturbatively with an expansion
over planar binary trees. In the quantum field theory describing a
scalar self-interacting field, the Green functions are naturally
expanded over rooted trees (non-planar and non-binary). In contrast to
this, in quantum electrodynamics Brouder adopted binary trees to
describe the coupling of two interacting fields, the left path for the
electron field and the right path for the photon field, and adopted 
planar trees to respect the non-commutativity of the product among the
Green functions' coefficients, which are $4\times 4$ matrices. 

The expansion of the Green functions over planar binary trees requires
some simple grafting operations among trees, \cite{Brouder}. 
Instead, the renormalization of these Green functions gives rise to 
some Hopf algebras on planar binary trees, cf. \cite{BFqedren}, 
analogue to the famous Connes-Kreimer Hopf algebra encoding the 
renormalization of the scalar $\Phi^3$ theory, cf. \cite{Kreimer,CKI}. 
These Hopf algebras can naturally be seen as coordinate rings of some
pro-algebraic groups of formal series expanded over trees. 
One of them turns out to be a related to an operad on planar binary 
trees, cf. \cite{Frabetti2008}.

There are in fact two operads on planar binary trees in the
literature. In the present context we are concerned with the duplicial
operad, cf. \cite{Loday2006}. The other one is the dendriform operad
\cite{LodayDendriform}, which has been the subject of a lot of
attention recently. In particular, it has been shown that the
dendriform operad is deeply related to the family of posets called
Tamari lattices, cf. \cite{Tamari}. In this article, we show that
Tamari lattices are also directly related to the duplicial operad. To
do it, we compute the inverse of some series in the group of
tree-expanded series associated with the duplicial operad.  \bigskip

The paper is organized as follows. In the first section we recall 
Brouder's expansion of the photon and the electron Green's functions 
on planar binary trees, before and after the renormalization. 
In the second section we review the Hopf algebras of renormalization
for quantum electrodynamics in the context of planar binary trees, 
and their dual groups of tree-expanded series. 
In the last section we show that the M\"obius function of the Tamari 
posets of planar binary trees gives rise to a particular series in the
group of tree-expanded series which generalises the group of formal
diffeomorphisms on a line. 
\bigskip\bigskip  

\noindent
{\bf Acknowledgements.} 
This work was partially financed by the ANR-06-BLAN-0380 ``HopfCombOp'' and by the 
ANR-05-JCJC-0044-01 ``AHBE''.


\tableofcontents


\section{Quantum electrodynamics and tree-expanded 
series}

Quantum electrodynamics (QED) is the quantisation of classical 
electromagnetism, that is the field theory describing the 
attraction/repulsion between electrons, by means of the photons as
mediators. 
In this section we review the perturbative expansion of the 2-point 
Green's functions of the electron and the photon on planar binary
trees, as proposed by C.~Brouder in \cite{Brouder}.


\subsection{Dyson-Schwinger equations for the electron and the photon 
propagators}

Denote by $\R^{1,3}$ the Minkowski space, with flat diagonal metric 
$g_{\mu\nu}=(1,-1,-1,-1)$, relativistic space-time coordinates 
$x=x^\mu \in \R^{1,3}$ and momenta $p=p_\mu$. 

Let $\psi:\R^{1,3}\longrightarrow\C^4$ be the fermionic field
describing the electron, with mass $m$ and electric charge $e$, 
and let $A=(A^\mu):\R^{1,3} \longrightarrow \C^4$ be the massless
bosonic field describing the electromagnetic potential (the photon). 
The dynamic of the system of interacting electrons and photons is
described by the Lagrangian density 
\begin{align*}
\L_{QED}(\psi,A;e,m) &= \L_{Dirac}(\psi;m) + \L_{Maxwell}^{\xi}(A^\mu) 
+ \L_{int}(\psi,A^\mu;e) 
\end{align*}
where 
\begin{align*}
\L_{Dirac}(\psi;m) 
&= \bar{\psi}(x^\mu)\big(i\gamma^\mu \partial_\mu -m\big)\psi(x^\mu), 
\end{align*}
is the Lagrangian describing the free electrons 
($\gamma^\mu$ are the Dirac 4x4 matrices and $\bar{\psi}=\psi^*\gamma^0$ 
is the anti-fermionic field, e.g. the positron), 
\begin{align*}
\L_{Maxwell}^{\xi}(A) 
&= -\frac{1}{4}\big(\partial^\mu A^\nu-\partial^\nu A^\mu\big)^2 
-\frac{\xi}{2}\big(\partial_\nu A^\mu\big)^2 , 
\end{align*}
is the Lagrangian describing the free photons ($\xi$ is a gauge parameter 
introduced as a fictive mass to avoid the problems due to the absence 
of the photon mass), and 
\begin{align*}
\L_{int}(\psi,A^\mu;e) &= -e\ \bar{\psi} \gamma^\mu \psi A^\mu
\end{align*}
is the term describing the interaction (of order 3), with coupling
constant given by the electric charge $e$. 

In this paper we are concerned with the the connected 2-point Green functions
\begin{align*}
\left\{\begin{array}{rl}
S(x,y) &\ds = \la \psi(x)\psi(y)\ra, \qquad\mbox{for the electron} \\ &\\ 
D_{\mu\nu}(x,y) &\ds = \la A_\mu(x) A_\nu(y)\ra, \qquad\mbox{for the photon} 
\end{array}\right. 
\end{align*}
whose square module represent the probability that the quantum field 
moves from the point $y$ to the point $x$ in space-time.
(We suppose that the 1-point Green's functions are zero.)  
If the fields are free, that is, the Lagrangian of the system is the sum 
$\L_{Dirac}(\psi;m)+\L_{Maxwell}^{\xi}(A^\mu)$ without the interaction
part, the 2-point Green functions coincide with the free propagators. 
These are $4\times 4$ matrix-valued distributions on $\R^{1,3}$, whose
Fourier transforms on the momenta space are 
\begin{align*}
\left\{\begin{array}{rl}
S_0(p) &\ds = \frac{1}{\gamma^\mu p_\mu -m +i\epsilon}, \\ &\\ 
D_{0,\mu\nu}(p) &\ds = -\frac{g_{\mu\nu}}{p^2+i\epsilon} 
+ \left(1-\frac{1}{\xi}\right) 
\frac{p_\mu p_\nu}{(p^2+i\epsilon)^2}. 
\end{array}\right. 
\end{align*}
 
If the fields interact according to the Lagrangian $\L_{QED}$, 
the 2-point Green functions (that we shall keep calling propagators) 
satisfy a system of two functional equations called {\em Dyson-Schwinger 
equations\/}, analogue to the equation of motion in the case of 
classical fields. 
For an isolated system, C.~Brouder presented these two equations 
in the following form, cf. \cite{Brouder}: 
\begin{align}
\label{DS-QED}
\left\{\begin{array}{rl}
S(p) &\ds = S_0(p) +i\ e^2\ S_0(p) \int\frac{d^4q}{(2\pi)^4}\ \gamma^\lambda\ 
D_{\lambda\lambda'}(q) \frac{\delta S(p-q)}{e\ \delta A_{\lambda'}(q)}, \\ &\\ 
D_{\mu\nu}(p) &\ds = D_{0,\mu\nu}(p) -i\ e^2\ D_{0,\mu\lambda}(p) 
\int\frac{d^4q}{(2\pi)^4}\ \mathrm{tr}\left[\gamma^\lambda\ 
\frac{\delta S(q)}{e\ \delta A_{\lambda'}(-p)} \right]\
D_{\lambda'\nu}(p). 
\end{array}\right. 
\end{align}
In this equations, the electron propagator $S(p)$ depends {\em a
priori} on an external electromagnetic field $A_{\lambda}(q)$, and the
functional derivative $\frac{\delta S(p)}{\delta A_{\lambda}(q)}$ 
detects this dependence. Then, since the equations are valid for an
isolated system, we imply that $S(p)$ and its functional derivatives 
are evaluated at $A_{\lambda}(q)=0$. 

The solution of the equations (\ref{DS-QED}) is a perturbative series in 
the powers of the {\em fine structure constant $\alpha=\frac{e^2}{4\pi}$}, 
\begin{align}
\label{QED-series-integer}
\left\{\begin{array}{rl}
S(p) &\ds = S_0(p) +\sum_{n\geq 1} \alpha^n\ S_n(p) , \\ & \\ 
D_{\mu\nu}(p) &\ds 
= D_{0,\mu\nu}(p) +\sum_{n\geq 1} \alpha^n\ D_{n,\mu\lambda}(p),
\end{array}\right. 
\end{align}
obtained recursively from the initial datas, 
which are the free propagators $S_0(p)$, $D_{0,\mu\nu}(p)$ and all the 
functional derivatives $\frac{\delta S_0(p)}{\delta A_{\lambda}(q)}$, 
$\frac{\delta^2 S_0(p)}{\delta A_{\lambda}(q_1) \delta A_{\lambda}(q_2)}$, 
etc. 
At each new order of perturbation, the coefficient of the series is 
an analytical expression involving the previous coefficients. 
In fact, for the series (\ref{QED-series-integer}), the system 
(\ref{DS-QED}) is equivalent to 
\begin{align}
\label{QED-sol-integer}
\left\{\begin{array}{rl}
S_n(p) &\ds 
= i S_0(p) \sum_{k+l=n-1}  \int\frac{d^4q}{(2\pi)^4}\ \gamma^\lambda\ 
D_{k,\lambda\lambda'}(q) \frac{\delta S_l(p-q)}{e\ \delta A_{\lambda'}(q)}, 
\\ &\\ 
D_{n,\mu\nu}(p) &\ds = -i D_{0,\mu\lambda}(p) \sum_{k+l=n-1} 
\int\frac{d^4q}{(2\pi)^4}\ \mathrm{tr}\left[\gamma^\lambda\ 
\frac{\delta S_k(q)}{e\ \delta A_{\lambda'}(-p)} \right]\
D_{l,\lambda'\nu}(p). 
\end{array}\right. 
\end{align}


\subsection{Renormalization factors and Feynman graphs}

Equations (\ref{QED-sol-integer}) can be formally solved, but they
present a major problem of quantum field theory: at any order 
of perturbation $n>0$, even for $n=1$, the perturbative coefficients 
$S_n(p)$ and $D_{n,\mu\nu}(p)$ contain divergent integrals. 
The physical explanation is that the quantum fields $\psi$ and $A$, 
and the measurable parameters $e$ and $m$, have an energy 
level which is far too high with respect to the classical ones, and  
the classical Lagrangian $\L_{QED}(\psi,A;e,m)$ is not sufficient to 
describe them. As a consequence, the Lagrangian must be
modified into the so-called the {\em renormalized Lagrangian} 
\begin{align*}
\L_{QED}^{ren}(\psi,A;e,m) 
&= Z_2(e)\ \bar{\psi}(x^\mu)\big(i\gamma^\mu \partial_\mu 
-(m+\delta m(e)\big)\psi(x^\mu) \\ 
&\quad - Z_3(e)\ \frac{1}{4}\big(\partial^\mu A^\nu-\partial^\nu A^\mu\big)^2
-\frac{\xi}{2}\big(\partial_\nu A^\mu\big)^2 \\ 
&\quad -Z_1(e)\ e\ \bar{\psi} \gamma^\mu \psi A^\mu, 
\end{align*}
where the so-called {\em renormalization factors} $Z_1(e)$, $Z_2(e)$, 
$Z_3(e)$ and $\delta m(e)$ are series in the powers of $\alpha$ which 
compensate the divergences and allow to produce finite quantities.   
If we call {\em bare parameters} the rescaled parameters 
\begin{align}
\label{bare parameters}
\left\{\begin{array}{rl}
m_0 &= m+ \delta m(e), 
\\ & \\ 
e_0 &= \dfrac{e\ Z_1(e)}{Z_2(e) \sqrt{Z_3(e)}}, 
\end{array}\right. 
\end{align}
and {\em bare fields} the rescaled fields 
\begin{align*}
\left\{\begin{array}{rl}
\psi_0 &= \sqrt{Z_2(e)}\ \psi, 
\\ & \\ 
A^\mu_0 &= \sqrt{Z_3(e)}\ A^\mu, 
\end{array}\right. 
\end{align*}
it turns out that 
\begin{align*}
\L_{QED}^{ren}(\psi,A;e,m) &= \L_{QED}(\psi_0,A_0;e_0,m_0). 
\end{align*}
Then, using the Dyson-Schwinger equations (\ref{DS-QED}) 
one can compute the {\em bare propagators} $S(p;e_0,m_0)$ and 
$D_{\mu\nu}(p;e_0,m_0)$ relative to the bare fields, and F.~Dyson 
showed in \cite{Dyson} that the {\em renormalized propagators} 
can be found simply as 
\begin{align}
\label{Dyson Formula}
\left\{\begin{array}{rl}
S^{ren}(p;e,m) &= Z_2(e)^{-1}\ S(p;e_0,m_0), 
\\ & \\ 
(D_{\mu\nu}^T)^{ren}(p;e,m) &= Z_3(e)^{-1}\  D_{\mu\nu}^T(p;e_0,m_0), 
\end{array}\right. 
\end{align}
where $D_{\mu\nu}^T(p) = D_{\mu\nu}(p)-\frac{1}{\xi} 
\frac{p_{\lambda} p_{\mu}}{(p^2+ i\epsilon)^2}$ is the transversal part 
of the propagator, while the longitudinal part $\frac{1}{\xi} 
\frac{p_{\lambda} p_{\mu}}{(p^2+ i\epsilon)^2}$ is not affected by the 
renormalization. 

One way to compute the renormalization factors is to expand the propagators 
over all possible connected {\em Feynman diagrams} with 2 external legs. 
The amplitude of the graphs are the simplest integrals appearing in 
the perturbative coefficients of the integral expansion of the propagators. 
Those which diverge must be renormalized. The divergence extracted from 
a graph forms the {\em counterterm} of the graph, and the renormalization 
factors are constructed by reassembling the counterterms of all concerned 
graphs. 

Feynman graphs describe all possible virtual interactions between electrons 
and photons in the indeterminate quantum fluctuation. 
They are constructed on some fixed types of edges, which represent the free 
propagators, and with some fixed types of vertices, which represent the 
allowed interactions, and therefore are completely determined by the 
Lagrangian. If we denote by $\Gamma$ the connected Feynman graphs with 2 
external legs, and we distinguish the electron graphs ($e$) from the photon 
graphs ($\gamma$), the perturbative expansion of the QED propagators is 
\begin{align}
\label{QED-series-graphs}
\left\{\begin{array}{rl}
S(p) &\ds = S_0(p) +\sum_{\Gamma^e} \alpha^{L(\Gamma)}\ S_{\Gamma}(p),
\\ & \\ 
D_{\mu\nu}(p) &\ds 
= D_{0,\mu\nu}(p) +\sum_{\Gamma^\gamma} \alpha^{L(\Gamma)}\ 
D_{\Gamma,\mu\lambda}(p),
\end{array}\right. 
\end{align}
where $L(\Gamma)$ denotes the number of loops of the graph. 
The coefficients $S_{\Gamma}(p)$ and $D_{\Gamma,\mu\lambda}(p)$ are some 
analytical expressions called the {\em amplitude\/} of the graph. 
The integral expansion (\ref{QED-series-integer}) and the diagrammatic
expansion (\ref{QED-series-graphs}) of the solution of Eqs.~(\ref{DS-QED}) 
are of course related: at each order of perturbation we have 
\begin{align*}
\left\{\begin{array}{rl}
S_n(p) &\ds = \sum_{L(\Gamma)=n}\ S_{\Gamma}(p), \\ &\\ 
D_{n,\mu\nu}(p) &\ds = \sum_{L(\Gamma)=n}\ D_{\Gamma,\mu\lambda}(p). 
\end{array}\right. 
\end{align*}

The system~(\ref{DS-QED}), applied to the series expanded as 
(\ref{QED-series-graphs}), gives an algorithm to construct the amplitudes, 
called the {\em Feynman's rules\/}, cf. for instance Appendix A in 
\cite{ItzyksonZuber}. 
Roughly speaking, Feynman's rules are a dictionary between graphical signs 
(edges, vertices, loops) and analytical expressions (free propagators,
factors of the coupling constant, integrals, overall symmetry factors). 
Feynman's rules have many advantages: they are intuitive and easy to 
memorize, they allow to compute the coefficients of the propagators 
each independently of any other, and finally they allow to compute the
counterterms of the graphs and therefore the coefficients of the
renormalization factors.  
Based on this method, one can prove for instance the {\em Ward-Takahashi 
identity} $Z_2(e)=Z_1(e)$, which imply that the charge is renormalized as 
\begin{align}
\label{Ward-Takahashi}
e_0 &= e\ Z_3(e)^{-\frac{1}{2}}.  
\end{align}
The computation of the counterterms requires a specific algorithm, 
the so-called {\em BPHZ formula}, which takes into account the 
subdivergences of the complicated graphs. 
The intricate combinatorics of this algorithm is nowadays completely 
clear, since A.~Connes and D.~Kreimer proved in \cite{CKI,CKII}
that it is equivalent to a Hopf algebra structure on the set of
Feynman graphs. 

However the method based on Feynman graphs has also two main disadvantages. 
On one side, the number of Feynman diagrams increases exponentially with 
the number of vertices, making difficult to keep control of the summability of
the perturbative expansions on graphs. 
On the other side, the amplitude of a graph with many vertices turns out 
to be a very complicated analytical expression. The computation of the
propagators requires hundreds of integrals even at low orders of
perturbation, and the computation of the counterterms, performed
through the BPHZ formula, makes the computation of each renormalized
integral even longer. 


\subsection{Tree-expansion of QED propagators}

In the paper \cite{Brouder}, C.~Brouder proposed an alternative perturbative 
solution of the Dyson-Schwinger equations~(\ref{DS-QED}), which has
the main advantage of reducing drastically the number of computations
required at each order of perturbation. To do it, he based the 
expansion of the perturbative series on the set of {\em planar binary 
rooted trees\/}, instead of Feynman graphs. 
These trees are planar graphs without loops, and a preferred external
edge called the root. For a tree $t$, we denote by $|t|$ the number of
its internal vertices. The tree $\treeO$, with no internal vertices, 
is called the {\em root tree\/}. The tree $\treeA$, with one internal 
vertex, is called the {\em vertex tree\/}. Other examples of trees, 
those with number of internal vertices equal to two and three, are 
\begin{align*}
\treeAB, \quad \treeBA, \quad 
\treeABC, \quad \treeBAC, \quad \treeACA, \quad \treeCAB, \quad \treeCBA. 
\end{align*}
Let us denote by $Y_n$ the set of planar binary trees with $n$ internal 
vertices, by $Y=\bigcup_{n=0}^\infty Y_n$ the set of all trees, and by 
$\Y=\bigcup_{n=1}^\infty Y_n$ the set of all trees with at least one 
internal vertex. 
Finally, let us denote by $\vee :Y_n \times Y_m \longrightarrow Y_{n+m+1}$ 
the operation which grafts two trees on a new root, that is
\begin{align*}
s \vee t &= \lrgraft{s}{t}. 
\end{align*}

If we suppose that the bare 2-point Green's functions of QED are 
formal series of the form 
\begin{align}
\label{QED-series-trees}
\left\{\begin{array}{rl}
S(p;e_0,m_0) &\ds = S_0(p) +\sum_{t\in\Y} e_0^{2|t|}\ S_{t}(p) , \\ & \\ 
D_{\mu\nu}(p;e_0,m_0) &\ds 
= D_{0,\mu\nu}(p) +\sum_{t\in\Y} e_0^{2|t|}\ D_{t,\mu\lambda}(p),
\end{array}\right. 
\end{align}
then the system~(\ref{DS-QED}) determines the coefficients of these series 
as 
\begin{align}
\label{QED-sol-trees}
\left\{\begin{array}{rl}
S_t(p) &\ds 
= i S_0(p) \int\frac{d^4q}{(2\pi)^4}\ \gamma^\lambda\ 
D_{t^l,\lambda\lambda'}(q) 
\frac{\delta S_{t^r}(p-q)}{e_0\ \delta (A_0)_{\lambda'}(q)}, 
\\ &\\ 
D_{t,\mu\nu}(p) &\ds = -i D_{0,\mu\lambda}(p) 
\int\frac{d^4q}{(2\pi)^4}\ \mathrm{tr}\left[\gamma^\lambda\ 
\frac{\delta S_{t^l}(q)}{e_0\ \delta (A_0)_{\lambda'}(-p)} \right]\ 
D_{t^r,\lambda'\nu}(p),
\end{array}\right. 
\end{align}
where, for any tree $t\neq \treeO$, the trees $t^l$ and $t^r$ are such that 
$t=t^l\vee t^r$. 
The expansion on trees (\ref{QED-series-trees}) is related to the
integral expansion (\ref{QED-series-integer}) because the coefficients
of the integral series at the perturbative order $n$ are the sum of 
the corresponding coefficients over the trees $t$ such that $2|t|=n$. 
It is also related to the expansion (\ref{QED-series-graphs}) on Feynman
graphs because the coefficients depending on a tree $t$ are the sum of
the coefficients depending on some Feynman graphs which can be found
with an algorithm given in \cite{BFqedren}. Roughly speaking, a tree
corresponds to a finite sum of Feynman diagrams. 

{\em A priori}, a BPHZ formula for the amplitude of the trees is not
known, nor its equivalent algebraic version based on a Hopf algebra of
planar binary trees. 
Therefore, to find the tree-expansion for the renormalized propagators, 
it is necessary to solve directly the Dyson-Schwinger equations to which
they are constrained. These are best given on the inverse of the photon 
propagator, that is, on the {\em vacuum polarization} 
\begin{align*}
\Pi_{\lambda\mu}(p) & =  
(p_\lambda p_\mu -p^2 g_{\lambda\mu})-\xi\ p_\lambda p_\mu
-{[D^{-1}]}_{\lambda\mu}(p).
\end{align*}
Then, the {\em renormalized Dyson-Schwinger equations} are given by 
\begin{align}
\label{DS-QED-REN}
\left\{\begin{array}{rl}
S^{ren}(p;e,m) &\ds = Z_2^{-1}\ S_0(p)- \delta m\ S_0(p)\ S^{ren}(p) 
+ i e^2 S_0(q) \int\frac{\dd^4q}{(2\pi)^4} 
\gamma^\lambda {D^{ren}}_{\lambda\lambda'}(q)
\frac{\delta S^{ren}(p-q)}{e\delta A_{\lambda'}(q)}, \\ &\\ 
\Pi^{ren}_{\lambda\mu}(p;e,m) 
&\ds = (1-Z_3)(p_\lambda p_\mu -p^2 g_{\lambda\mu})
- i\ e^2\ Z_2 \int \frac{\dd^4q}{(2\pi)^4} 
\mathrm{tr}\left[\gamma_\lambda 
\frac{\delta S^{ren}(q)}{e \delta A^\mu(-p)}\right]. 
\end{array}\right. 
\end{align}
Assuming that the propagators are expanded on planar binary trees
according to 
\begin{align}
\label{QED-series-trees-ren}
\left\{\begin{array}{rl}
S^{ren}(p;e,m) &\ds = S_0(p) +\sum_{t\in\Y} e^{2|t|}\ S^{ren}_{t}(p) , \\ & \\ 
D^{ren}_{\mu\nu}(p;e,m) &\ds 
= D_{0,\mu\nu}(p) +\sum_{t\in\Y} e^{2|t|}\ D^{ren}_{t,\mu\lambda}(p),
\end{array}\right. 
\end{align}
and that the renormalization factors are also expanded on planar
binary trees, that is, 
\begin{align}
\nonumber
Z_2(e) &= 1+\sum_{|t|>0} e^{2|t|}\ C_2(t),
  \quad\mathrm{with}\quad C_2(\treeO)=1, \\
\label{Zfactors-trees}
Z_3(e) &= 1-\sum_{|t|>0} e^{2|t|}\ C_3(t),
  \quad\mathrm{with}\quad C_3(\treeO)=1, \\
\nonumber
\delta m(e) &= \sum_{|t|>0} e^{2|t|}\ C_m(t),
  \quad\mathrm{with}\quad C_m(\treeO)=0, 
\end{align}
then the system~(\ref{DS-QED-REN}) determines the coefficients of the
renormalized propagators as 
\begin{align}
\label{QED-sol-trees-REN}
\left\{\begin{array}{rl}
S^{ren}_t(p) &\ds 
= C_2(I(t)) - S_0(p)\ (C_m\ast S^{ren})(t) 
+ i S_0(p) \int\frac{d^4q}{(2\pi)^4}\ \gamma^\lambda\ 
D^{ren}_{t^l,\lambda\lambda'}(q) 
\frac{\delta S^{ren}_{t^r}(p-q)}{e\ \delta A_{\lambda'}(q)}, 
\\ &\\ 
D^{ren}_{t,\mu\nu}(p) &\ds = D^{ren}_{\treeO\vee t_r,\mu\nu}(p)\ 
(p_\mu p_\nu-p^2 g_{\mu\nu}) D^{ren}_{t_l,\mu\nu}(p), 
\qquad\mbox{if $t=t_l\vee t_r$ with $t_l\neq\treeO$}, 
\\ &\\ 
D^{ren}_{\treeO\vee t,\mu\nu}(p) 
&\ds = C_3(\treeO\vee t) D^T_{0,\mu\lambda}(p)
-i D^T_{0,\mu\lambda}(p) 
\int\frac{d^4q}{(2\pi)^4}\ \mathrm{tr}\left[\gamma^\lambda\ 
\frac{\delta (S^{ren}(q)\ast C_2)(t)}{e\ \delta A_{\lambda'}(-p)} \right]\ 
D^T_{0,\lambda'\nu}(p). 
\end{array}\right. 
\end{align}
In these equations we make use of a convolution $\ast$ dual to the 
pruning coproduct on trees defined recursively by 
$\Die(\treeO)=\treeO\otimes\treeO$ and 
\begin{align}
\label{pruning coproduct} 
\Die(t) &= \treeO\otimes t 
+ \sum_{\Die(t_r)} (t_l\vee (t_r)_{(1)})\otimes (t_r)_{(2)}, 
\qquad\mbox{for $t=t_l\vee t_r$}.
\end{align}
The map $I$ appearing in the first equation is the antipode associated
to this coproduct. 

The coproduct $\Die$ turns out to be related to the operation 
{\em under} among trees, introduced by J.-L.~Loday in 
\cite{LodayArithmetree}: given two planar binary trees
$u,v\neq\treeO$, the tree $u$ under $v$, denoted by $u\under v$, is
the tree obtained by grafting the right-most leaf of $u$ under the
root of $v$, that is, 
\begin{align*}
u \under v &= \rgraft{u}{v}. 
\end{align*}
Then, Eq.~(\ref{pruning coproduct}) can be simply rewritten as 
\begin{align*}
\Die(t) &= \sum_{t=u\under v} u \otimes v, 
\end{align*} 
where we also suppose that $\Die$ is a multiplicative
map which respects the decomposition of a tree $t=t_l\vee t_r$ into 
its two multiplicative factors $t=(t_l\vee\treeO)\under t_r$. 

In fact, observing carefully the equations for the photon propagator,
one can see that another operation among trees is implicitly used: 
the decomposition of a tree $t$ into two multiplicative factors $t^l$ and 
$\treeO\vee t^r$. The product among trees which corresponds to this
decomposition is the operation {\em over},
cf.~\cite{LodayArithmetree}: 
given two planar binary trees $u,v\neq\treeO$, the tree $u$ over $v$, 
denoted by $u\over v$, is
the tree obtained by grafting the root of $u$ over the left-most leaf 
of $v$, that is, 
\begin{align*}
u \over v &= \lgraft{v}{u}. 
\end{align*}
Associated to this product there is of course a coproduct 
\begin{align*}
\Dip(t) &= \sum_{t=u\over v} u \otimes v. 
\end{align*} 

It turns out that the relationship between the bare propagators 
$S(p;e_0,m_0)$, $D_{\mu\nu}(p;e_0,m_0)$ and the renormalized
propagators $S^{ren}(p;e,m)$, $D^{ren}_{\mu\nu}(p;e,m)$, all expanded
on planar binary trees, is governed by two Hopf algebra structures
on the set of trees based on the two coproducts $\Die$, $\Dip$ 
and also on another coproduct $\Da$ analogue to the Connes-Kreimer 
coproduct on Feynman diagrams. 


\section{Renormalization Hopf algebras and groups 
of tree-expanded series}

\subsection{QED renormalization Hopf algebras on trees}

In \cite{BFqedren} and \cite{BF2003} it is shown that the relationship
between the QED propagators before and after the renormalization, excluding 
the mass renormalization and considering the expansion over planar binary 
trees, is described by a semidirect coproduct of Hopf algebras on trees, 
involving a Hopf algebra $\He$ for the electron propagator, one for the 
photon propagator $\Hp$, and one for the electric charge $\Ha$. 

As algebras, $\He$ and $\Hp$ are both isomorphic to the free 
non-commutative algebra generated by all trees, where we identify 
the root tree $\treeO$ to the formal unit. 
Therefore $\He=\Hp=\Q\langle\Y\rangle \cong \Q\langle Y \rangle/(\treeO-1)$.
The coalgebra structures are given by the pruning coproducts
$\Dip:\Hp\longrightarrow\Hp\otimes\Hp$ and
$\Die:\He\longrightarrow\He\otimes\He$ defined as the dual operations
of the two grafting products {\em over} and {\em under} on trees: 
\begin{align*}
\mbox{over:}\qquad t \over s = \lgraft{s}{t} , 
&\qquad \Dip(u) = \sum_{t\over s=u} t \otimes s, \\ 
\mbox{under:}\qquad t \under s = \rgraft{t}{s} ,
&\qquad \Die(u) = \sum_{t\under s=u} t \otimes s. 
\end{align*}
The counits $\varepsilon:\Hp\longrightarrow \Q$ and
$\varepsilon:\He\longrightarrow \Q$ are dual to the unit $\treeO$,
that is $\varepsilon(t)= \delta_{t,\treeO}$. 
Note that the Hopf algebras $\He$ and $\Hp$ are neither commutative nor 
cocommutative. 

On the other side, $\Ha$ is the abelian quotient of the algebra $\Q Y$ 
of all trees endowed with the over product (which is not commutative). 
Thus the root tree $\treeO$ is the unit, and if we set $V(t)=\treeA\under t$, 
the algebra $\Ha$ is in fact isomorphic to the polynomial algebra 
$\Q[V(t), t\in Y]$. The coproduct $\Da:\Ha \longrightarrow \Ha \otimes\Ha$ 
is defined on the generators by the assignment 
\begin{align}
\label{Da-def}
\Da (V(t)) &= 1\otimes V(t)+\da(V(t)), 
\end{align}
where $\da:\Ha\longrightarrow \Ha\otimes\Ha$ is a right coaction of $\Ha$ 
on itself (w.r.t. the coproduct $\Da$), defined recursively as 
\begin{align}
\label{da-def}
\da(V(t)) &= (V\otimes\Id) \left[\Da(t^l)\over\da(V(t^r))\right],
\end{align} 
where $t=t^l\over V(t^r)$. 
The counit $\varepsilon:\Ha\longrightarrow \Q$ is of course the unital algebra 
morphism with value $\varepsilon(V(t))=0$ on the generators. 

According to \cite{BFqedren} and \cite{BF2003}, the massless renormalization 
of the electron propagator expanded over planar binary trees is described by 
the semidirect (or smash) coproduct Hopf algebra $\Hq=\Ha\ltimes\He$, 
while the massless renormalization of the photon propagator is described by 
the Hopf algebra $\Ha$ itself, seen as a Hopf subalgebra of the semidirect 
coproduct $\Ha\ltimes\Hp$. These semidirect coproduct Hopf algebras can 
be defined, according to \cite{Molnar1977}, because there is a coaction 
of $\Ha$ on $\He$ and another one on $\Hp$, namely two variations of $\da$, 
which preserve the algebra structures. 

The results can be summarized as follows: 
\begin{itemize}
\item
the bare and renormalized amplitudes $S(p;e_0,m_0)$, $S^{ren}(p;e_0,m_0)$ 
and the renormalization factor $Z_2(e)$ can be seen as characters of
the Hopf algebra $\He$ with values in an algebra of regularized 
amplitudes $\A_{reg}$; 
\item 
the bare and renormalized amplitudes $D_{\mu\nu}(p;e,m)$, 
$D^{ren}_{\mu\nu}(p;e,m)$ and the renormalization factor $Z_3(e)$ 
can be seen as characters of the Hopf algebra $\Hp$, 
with values in the same algebra $\A_{reg}$; 
\item 
the bare charge $e_0(e)$ can be seen as a character of the Hopf algebra $\Ha$, 
with values in the same algebra $\A_{reg}$; 
\item 
the relationship between the tree-expanded coefficients of the
propagators before and after the renormalization for massless QED 
is given by the convolution associated to the coproducts of the Hopf 
algebras $\Hq=\Ha\ltimes\He$, for the electron, and 
$\Ha \subset \Ha\ltimes\Hp$ for the photon. 
\end{itemize}

It is well known that the characters of a commutative Hopf algebra form 
a group. If the algebra is the inductive limit of finite-dimensional 
graded algebras, as is the case here, the group is pro-algebraic (and even 
pro-unipotent). In the next section we describe the pro-algebraic groups 
formed by the characters of the Hopf algebras related to the QED 
renormalization, abelianized when necessary. 


\subsection{Groups of tree-expanded series}

The abelian quotients $\Hp_{ab}$ and $\He_{ab}$ of the two Hopf algebras 
$\Hp$ and $\He$ are the coordinate rings of two pro-algebraic groups,
denoted by $\Gp$ and $\Ge$ respectively, cf.~\cite{Frabetti2008}. 
For a given unital, associative and commutative algebra $\A$, 
consider the set 
\begin{align*}
\Gi_Y &= \left\{ \sA=\sum_{t\in Y} \sA_t\ t,\ \sA_{\treeO}=1 \right\}, 
\end{align*}
of tree-expanded series with coefficients $\sA_t$ in $\A$ and invertible 
constant term. 
The groups $\Gp$ and $\Ge$ are given on $\Gi$ respectively by 
the over and the under products on series, induced by the analogous 
operations on trees: 
\begin{align*}
\mbox{over}\qquad \sA\over \sB = \sum_{s,t\in Y} \sA_s\ \sB_t\ {s\over t}, 
\qquad\mbox{for $\Ge$}, \\ 
\mbox{under}\qquad \sA\under \sB = \sum_{s,t\in Y} \sA_s\ \sB_t\ {s\under t}, 
\qquad\mbox{for $\Gp$}. 
\end{align*}
In both cases, the unit is given by the series $\treeO$. 

We can identify a tree-expanded series $\sA=\sum_{t\in Y} \sA_t\ t$ with 
a ``generalized'' series on a variable $x$, by setting 
$\sA(x)=\sum_{t\in Y} \sA_t\ x^t$ , where the monomial $x^t$ is a
formal symbol.
Then, it is easy to see that the groups $\Ge$ and $\Gp$ are 
non-abelian generalizations of the abelian group 
\begin{align*}
\Gi &= \left\{ \sA(x)=\sum_{n\geq 0} \sA_n\ x^n,\ \sA_0=1 \right\}, 
\end{align*}
of usual invertible series in one variable, with coefficients $\sA_n$ in
$\A$, and considered with the multiplication. 
\bigskip 

Similarly, the Hopf algebra $\Ha$ can be realized as a group of 
tree-expanded series, endowed, this time, with an operation which 
generalizes the composition (substitution) of formal series. 
To do this, we first consider the set 
\begin{align*}
\Gd_Y &= \left\{ \sA=\sum_{t\in \Y} \sA_t\ t, 
\ \sA_{\treeA}=1 \right\} 
\end{align*}
of tree-expanded formal diffeomorphisms with coefficients $\sA_t$ in $\A$. 
The composition of two tree-expanded series 
$\sA=\sum_{t} \sA_t\ t$ and $\sB=\sum_{s} \sB_s\ s$ 
is given by 
\begin{align*}
\sA \circ \sB
& = \underset{s_1,s_2,\dots,s_{|t|}\in \Y}{\sum_{t\in \Y}} 
\sA_t\ \sB_{s_1} \sB_{s_2}\dots \sB_{s_{|t|}} \ 
{\mu_t(s_1,s_2,\dots,s_{|t|})},  
\end{align*}
where $\mu_t(s_1,s_2,\dots,s_{|t|})$ is the tree obtained by inserting
the trees $s_1$, \dots, $s_{|t|}$ in the vertices of $t$, ordered from
left to right and from the leaves to the root of $t$. 
In fact, the operation $\mu$ is the operadic composition of the
Duplicial operad, also called OverUnder operad, cf. 
\cite{Loday2006,AguiarLivernet,VDLaan}, and the group $\Gd_Y$ is an
example of a general construction which works for any connected operad, cf. 
\cite{Chapoton,VanDerLaan,Frabetti2008}.  
The unit of the group $\Gd_Y$ is the series $\treeA$. 

As before, we can identify a tree-expanded series 
$\sA=\sum_{t\in \Y} \sA_t\ t$ with a ``generalized'' series 
in one variable, $\sA(x)=\sum_{t\in \Y} \sA_t\ x^t$. 
Then, if we define the power of the series $\sB(x)$ by a tree 
$t\neq\treeO$ as $\sB(x)^t = \mu_t(\sB(x),\sB(x),\dots,\sB(x))$, 
the composition of tree-expanded series can also be seen as 
a substitution, that is $(\sA \circ \sB)(x) = \sA(\sB(x)) 
= \sum_{t\in \Y} \sA_t\ \sB(x)^t$. 
In \cite{Frabetti2008} it is then shown that $\Gd_Y$ is a group 
which projects onto the group 
\begin{align*}
\Gd &= \left\{ \sA(x)=\sum_{n\geq 1} \sA_n\ x^n, 
\ \sA_1=1 \right\} 
\end{align*}
of usual formal diffeomorphisms on a line (tangent to the identity). 

The group $\Gd_Y$ contains two proper subgroups 
\begin{align*}
\Gr_Y &= \Gp\over \treeA 
= \big\{ \rho_\sA= \sum_{t\in Y} \sA_t \ \lvertexgraft{t}, 
\ \sA_{\treeO}=1 \big\}, \\  
\Gl_Y &= \treeA\under\Ge 
= \big\{ \lambda_\sA= \sum_{t\in Y} \sA_t \ \rvertexgraft{t}, 
\ \sA_{\treeO}=1 \big\}, 
\end{align*}
whose projection onto usual series is the product $x\,\Gi$, which is 
isomorphic to $\Gd$ itself. 

In \cite{Frabetti2008} it was proved that the group $\Ga$ is a proper 
subgroup of $\Gr_Y$ of the form 
\begin{align}
\label{Ga}
\Ga &= \left\{ \alpha_\sA=(\treeO-\treeA\under \sA)^{-1}\over \treeA,\ 
\sA\in \A[[Y]] \right\}, 
\end{align}
where $\A[[Y]]$ denotes the set of tree-expanded formal series 
$\sA=\sum_{t\in Y} \sA_t\ t$ with coefficients $\sA_t$ in the algebra $\A$. 
In fact, for any tree-expanded formal series 
$\sA=\sum_{t\in Y} \sA_t\ t\in \A[[Y]]$, the series 
$\treeO-\treeA\under \sA$ belongs to the set $\Gi_Y$, 
and therefore to the group $\Gp$ of tree-expanded invertible series 
with respect to the product $\over$. 
The projection of $\Ga$ onto usual series is the group 
$x(1-x\A[[x]])^{-1}$ which coincides, again, with the whole group 
$\Gd$. 


\section{Tree-expanded series in relation with Tamari posets} 

In this section, we will consider some examples of tree-expanded
series in $\Gd_Y$. We will compute the inverse of some of them with
respect to composition and show that some others are related to the
Tamari lattices.

We will need the following involution on tree-expanded series. Let
$\sS=\sum_{n\geq 1} \sS_n$ be a tree-expanded series in $\Gd_Y$,
decomposed with respect to the order of the trees. The
\textit{suspension} of $\sS$ is the series $\tilde{\sS}=\sum_{n\geq 1}
(-1)^{n-1} \sS_n$. The suspension of a composition
$\widetilde{\sA\circ\sB}$ is the composition of the suspensions
$\tilde{\sA}\circ\tilde{\sB}$.

\bigskip

We will also use the following properties of the composition:
\begin{equation}
\forall \sA,\sC,\sD \quad  (\sA \over \sD)\circ\sC=(\sA\circ\sC) \over (\sD\circ\sC) \quad\text{and}\quad (\sA\under \sD)\circ\sC=(\sA\circ\sC) \under (\sD\circ\sC).
\end{equation}

These properties follow from the relation of $\Gd_Y$ with the
Duplicial operad.

\bigskip

Let us consider the tree-expanded series $\sA$ defined inductively by
\begin{equation}
  \sA=\treeA+w\, \sA \over \treeA + \treeA \under \sA + w \, \sA \over
  \treeA \under \sA,
\end{equation}
where $w$ is a formal parameter. This is in fact the sum over all
trees where each tree $t$ appears with coefficient $w$ to the power the
number of right-oriented leaves of $t$ minus $1$:
\begin{equation}
  \sA=\treeA+w\,\treeAB+\treeBA+w^2\,\treeABC+w\,\treeBAC+w\,\treeACA+w\,\treeCAB +\treeCBA+\cdots
\end{equation}

Let us consider also the tree-expanded series $\sB$ defined by
\begin{equation}
  \sB=\treeA-w \,\treeA \over \sB - \sB \under \treeA - w \, \treeA \over
  \sB \under \treeA,
\end{equation}
where $w$ is a formal parameter. This is in fact the sum over $p\geq
0$ and $q\geq 0$ of $(-1)^{p+q} w^p \, c_p \over \treeA \under d_q$
where $c_p$ is the left comb with $p$ vertices and $d_q$ is the right
comb with $q$ vertices:
\begin{equation}
  \sB=\treeA-w \treeAB-\treeBA+w^2\,\treeABC+w\,\treeACA +\treeCBA+ \cdots  
\end{equation}

\begin{proposition}
  In the group $\Gd_Y$, one has $\sA \circ\sB = \treeA$, that is $\sA$ is
  the inverse of $\sB$. This identity can be specialized to any
  complex value of $w$.
\end{proposition}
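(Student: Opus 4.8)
The plan is to avoid writing out either series explicitly and instead to exploit the two defining fixed-point equations together with the right-distributivity of $\circ$ over the products $\over$ and $\under$. First I would record that both $\sA$ and $\sB$ genuinely lie in $\Gd_Y$: in each defining equation every summand other than the leading $\treeA$ is built using at least one $\over$ or $\under$ and so has order at least two, whence the order-one part of each series is exactly $\treeA$ and the coefficient of $\treeA$ is $1$. This guarantees that the composition $\sA\circ\sB$ is well defined and that $\treeA$ acts as a two-sided unit, so in particular $\treeA\circ\sB=\sB$.

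The core step is to compose the defining equation of $\sA$ on the right with $\sB$ and to set $\sC=\sA\circ\sB$. Applying the distributivity relations to each of the three non-trivial terms — twice for the triple product, using the duplicial associativity of $\over$ and $\under$ to make that product unambiguous — and using $\treeA\circ\sB=\sB$, I obtain the fixed-point equation
\begin{equation*}
\sC=\sB+w\,\sC\over\sB+\sB\under\sC+w\,\sC\over\sB\under\sC.
\end{equation*}
Next I would substitute the candidate $\sC=\treeA$ into the right-hand side. The three non-leading terms then become $w\,\treeA\over\sB$, $\sB\under\treeA$ and $w\,\treeA\over\sB\under\treeA$, whose sum is precisely $\treeA-\sB$ by the defining equation of $\sB$; hence the right-hand side collapses to $\sB+(\treeA-\sB)=\treeA$. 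Thus $\sC=\treeA$ is a solution of the displayed equation.

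It then remains to argue uniqueness. The displayed equation determines $\sC$ recursively by order: its order-one part is forced to equal the order-one part $\treeA$ of $\sB$, and for $n\geq 2$ each of the terms $\sC\over\sB$, $\sB\under\sC$ and $\sC\over\sB\under\sC$ contributes in order $n$ only through components of $\sC$ of order strictly smaller than $n$. Hence the solution is unique, so $\sC=\sA\circ\sB=\treeA$; since $\Gd_Y$ is a group with unit $\treeA$, this yields $\sA=\sB^{-1}$. Finally, every coefficient in the identity $\sA\circ\sB=\treeA$ is a monomial in $w$, so the tree-by-tree equalities are polynomial identities in $w$ and may be specialized to any complex value.

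I expect the only delicate point to be the bookkeeping in passing from the equation for $\sA$ to the equation for $\sC$, namely checking that right-composition with $\sB$ really does distribute cleanly across the triple product $\sA\over\treeA\under\sA$ before the cancellation with the $\sB$-equation can be invoked. Notably, no use of the suspension is required for this statement; the entire argument runs on distributivity, the group unit, and the grading by order.
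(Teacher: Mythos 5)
Your proof is correct and follows essentially the same route as the paper: compose the defining equation of $\sA$ on the right by $\sB$ using right-distributivity of $\circ$ over $\over$ and $\under$, observe that the resulting fixed-point equation is also satisfied by $\treeA$ via the rearranged defining equation of $\sB$, and conclude by uniqueness of the solution. The only difference is one of explicitness — you spell out the order-by-order uniqueness argument and the polynomiality in $w$ that the paper leaves implicit in the phrase ``same induction and same initial term'' — which is a welcome elaboration, not a new method.
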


\begin{proof}
  Let us compute the composition $\sA \circ\sB$ starting from the definition of
  $\sA$. By properties of the composition, one gets, by multiplication on
  the right by $\sB$, that
  \begin{equation*}
    \sA \circ\sB=\sB+w\, (\sA\circ\sB) \over \sB + \sB \under (\sA\circ\sB) + w\, (\sA\circ\sB) \over  \sB \under (\sA\circ\sB).
  \end{equation*}
  Comparing this to the following form of the definition of $\sB$:
  \begin{equation}
    \treeA=\sB+w\, \treeA \over \sB + \sB \under \treeA + w\, \treeA \over
    \sB \under \treeA,
  \end{equation}
  one can see that $\treeA$ and $\sA \circ\sB$ both satisfy the same
  induction and have the same initial term, hence they are equal.
\end{proof}

Let us now consider the tree-expanded series $\sC$ and $\sD$ defined
by
\begin{equation}
  \sC={\treeA}+ \sC \over {\treeA}={\treeA}+ {\treeA} \over \sC={\treeA}+{\treeAB}+{\treeABC} +\cdots
\end{equation}
and 
\begin{equation}
  \sD={\treeA}+ \sD \under {\treeA}={\treeA}+ {\treeA} \under \sD={\treeA}+{\treeBA}+{\treeCBA}+\cdots
\end{equation}
So $\sC$ is in fact the sum over all left combs and $\sD$ is the sum
over all right combs. The series $\sC$ (resp. $\sD$) belongs to the
subgroup of $\Gd_Y$ formed by series indexed by left combs only (resp.
right combs only). It is quite clear (by inversion in these subgroups)
that $\sC^{-1}=\tilde{\sC}$ and $\sD^{-1}=\tilde{\sD}$.

\begin{proposition}
  One has
  \begin{equation}
    \label{Varbre}
    \sC + \sC \under \sD = \sD + \sC \over \sD.
  \end{equation}
  The series $\sE=\sC \circ\sD^{-1}$ is characterized by the equation
  \begin{equation}
    \sE={\treeA} + \sE \over {\treeA} - \sE \under {\treeA}.
  \end{equation}
\end{proposition}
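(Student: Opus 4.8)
The plan is to prove the two assertions in order and to deduce the second from the first. The identity~(\ref{Varbre}) is the substantial part; once it is in hand, the characterisation of $\sE=\sC\circ\sD^{-1}$ follows from a short formal manipulation that mirrors the previous proof (``same induction, same initial term'').

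First I would establish~(\ref{Varbre}) by a direct combinatorial computation. Since $\sC=\sum_{p\geq 1}c_p$ and $\sD=\sum_{q\geq 1}d_q$ are the sums of all left, resp. all right, combs, and since $\over$ and $\under$ are bilinear on tree-expanded series, it is enough to understand the single graftings $c_p\over d_q$ and $c_p\under d_q$. Writing $c_p=c_{p-1}\vee\treeO$ and $d_q=\treeO\vee d_{q-1}$ (with $c_0=d_0=\treeO$), the grafting definitions of the two products give the closed formulas
\begin{equation*}
c_p \over d_q = c_p \vee d_{q-1}, \qquad c_p \under d_q = c_{p-1}\vee d_q .
\end{equation*}
Summing over $p,q\geq 1$ and reindexing, one finds that $\sC\over\sD$ ranges over all $c_p\vee d_q$ with $p\geq 1,\ q\geq 0$, whereas $\sC\under\sD$ ranges over all $c_p\vee d_q$ with $p\geq 0,\ q\geq 1$. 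In the difference $\sC\over\sD-\sC\under\sD$ the terms with $p\geq 1,\ q\geq 1$ cancel in pairs, and only the boundary terms survive: the terms $c_p\vee\treeO=c_{p+1}$ with $p\geq 1$ contribute $\sC-\treeA$, while the subtracted terms $\treeO\vee d_q=d_{q+1}$ with $q\geq 1$ contribute $-(\sD-\treeA)$. Hence $\sC\over\sD-\sC\under\sD=\sC-\sD$, which is exactly~(\ref{Varbre}).

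Then I would obtain the equation for $\sE$ by composing~(\ref{Varbre}) on the right with $\sD^{-1}$. Composition is linear in its left argument and, by the distributivity properties recalled just before the introduction of $\sA$, right-composition with a fixed series distributes over both $\over$ and $\under$; using $\sD\circ\sD^{-1}=\treeA$ (the unit of $\Gd_Y$) and $\sC\circ\sD^{-1}=\sE$, the identity $\sC+\sC\under\sD=\sD+\sC\over\sD$ becomes
\begin{equation*}
\sE + \sE \under \treeA = \treeA + \sE \over \treeA ,
\end{equation*}
that is $\sE=\treeA+\sE\over\treeA-\sE\under\treeA$. Since each of $\sE\over\treeA$ and $\sE\under\treeA$ raises the number of internal vertices by one, this relation determines the homogeneous components of $\sE$ recursively from the initial term $\treeA$, so $\sE$ is indeed characterised by it.

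The only delicate point is the bookkeeping in the proof of~(\ref{Varbre}): I must check that the ``bulk'' trees $c_p\vee d_q$ with $p,q\geq 1$ appearing in $\sC\over\sD$ and in $\sC\under\sD$ are literally the same trees, so that they cancel, and that the surviving boundary trees are precisely the combs of length $\geq 2$, with no accidental identification between a nontrivial left comb and a nontrivial right comb. All of this follows from the uniqueness of the root decomposition $t=t_l\vee t_r$, but it is the step that has to be written out with care; everything else is formal.
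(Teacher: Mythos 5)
Your proof is correct and takes essentially the same route as the paper: both arguments come down to identifying products of combs with trees of the form $c_p\vee d_q$, the paper by writing each side of (\ref{Varbre}) as the sum of all trees $c_p \over \treeA \under d_q$ (possibly empty combs), you by cancelling the common ``bulk'' terms $c_p\vee d_q$ with $p,q\geq 1$ in the difference --- the same combinatorics with slightly different bookkeeping. The second assertion is then obtained exactly as in the paper, by right composition with $\sD^{-1}$ and the distributivity of $\circ$ over $\over$ and $\under$, with your added (and welcome) remark that the resulting equation determines $\sE$ degree by degree.
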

\begin{proof}
  It is enough to see that the left hand side of (\ref{Varbre}) is
  exactly the sum of all trees of the shape $c^p \over {\treeA} \under
  d^q$ where $c_p$ is a left comb (possibly empty) and $d_q$ is a
  right comb (idem). By symmetry, this is also true for the right-hand
  side, hence they are equal. The equation for $\sC \circ\sD^{-1}$ is
  obtained from this by composition on the right by $\sD^{-1}$.
\end{proof}

The series $\sE$ appears surprisingly to be related to the M{\"o}bius
function of the Tamari lattice. Recall that the Tamari lattice is a
classical partial order \cite{Tamari} on the set $Y_n$ of trees of
order $n$, where the left comb $c_n$ is the unique minimal element and
the right comb $d_n$ is the unique maximal element. The partial order
is defined by transitive closure of the following relation: $t\leq t'$
if $t'$ is obtained from $t$ by replacing a local configuration
$\treeAB$ by a local configuration $\treeBA$.

\begin{proposition}
  One has the following description of the series $\sE$ and $\sD^{-1}$:
  \begin{equation}
    \sE=\sum_{n \geq 1} \sum_{t\in Y_n} \mu(c_n,t) t
  \end{equation}
  and
  \begin{equation}
    \sD^{-1}={\treeA}+\sum_{n \geq 1} \sum_{t\in Y_n} \mu(c_n,\treeA
    \under t) {\treeA} \under t,
  \end{equation}
  where $\mu$ is the M{\"o}bius function of the Tamari lattice.
\end{proposition}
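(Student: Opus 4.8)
The plan is to prove the first identity by recognizing the Möbius series as the unique solution of the inductive equation that characterizes $\sE$ in the previous proposition, and then to deduce the formula for $\sD^{-1}$ as a corollary. Set $M=\sum_{n\geq 1}\sum_{t\in Y_n}\mu(c_n,t)\,t$. Since $M$ has initial term $\treeA$, and since $\sE$ is by the previous proposition the unique such series satisfying $\sE = \treeA + \sE \over \treeA - \sE \under \treeA$, it is enough to check that $M$ satisfies the same equation. Using $u \over \treeA=u\vee\treeO$ and the fact that $u\mapsto u\under\treeA$ is injective with image the set of trees whose rightmost vertex is a cherry (a copy of $\treeA$), the coefficient of a tree $s\in Y_m$ with $m\geq 2$ in this equation becomes one of three statements about the Tamari Möbius function: (a) if $s=s^{l}\vee\treeO$ with $s^{l}\neq\treeO$, then $\mu(c_m,s)=\mu(c_{m-1},s^{l})$; (b) if the rightmost vertex of $s$ is a cherry, say $s=s''\under\treeA$, then $\mu(c_m,s)=-\mu(c_{m-1},s'')$; (c) otherwise $\mu(c_m,s)=0$. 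The three cases are mutually exclusive (in case (a) the rightmost vertex is the root, whose left child $s^{l}$ is not a leaf) and exhaustive, and the base case is $\mu(c_1,\treeA)=1$, the initial term of $M$.

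Claim (a) is straightforward. The set $J=\{t\in Y_m:t^{r}=\treeO\}$ is an order ideal: starting from $s^{l}\vee\treeO$ one descends only by left rotations, and none is available at the root because its right child is a leaf, so the root keeps an empty right subtree throughout $[c_m,s]$. Hence $u\mapsto u\vee\treeO$ is a poset isomorphism from $Y_{m-1}$ onto $J$ carrying $c_{m-1}$ to $c_m=c_{m-1}\vee\treeO$, and $\mu(c_m,s^{l}\vee\treeO)=\mu(c_{m-1},s^{l})$ because the Möbius function of an interval is an isomorphism invariant.

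The pair (b)--(c) is the heart of the argument and the step I expect to be the main obstacle, since it cannot be settled by formal manipulation of series and requires genuine structural input on the Tamari order. I would approach it topologically, via the identity $\mu(c_m,s)=\tilde{\chi}\big(\Delta(c_m,s)\big)$, where $\Delta(c_m,s)$ is the order complex of the open interval. The goal is then to show that peeling off the rightmost cherry has a controlled homotopy effect: in case (c) that $\Delta(c_m,s)$ is contractible, so that $\mu(c_m,s)=0$; and in case (b) that $\Delta(c_m,s)$ is homotopy equivalent to the suspension of $\Delta(c_{m-1},s'')$, so that $\tilde{\chi}$ changes sign and $\mu(c_m,s)=-\mu(c_{m-1},s'')$. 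One checks directly on $Y_3$ that $\Delta(c_3,\treeCBA)\simeq S^{0}$ is the suspension of the empty complex $\Delta(c_2,d_2)$, while $\Delta(c_3,\treeCAB)$ is a point, matching $\mu=1$ and $\mu=0$. Concretely I would build the collapse map ``remove the rightmost cherry'' from $[c_m,s]$ to $Y_{m-1}$, identify its image with $[c_{m-1},s'']$, and analyse its fibers by a Quillen-type fiber lemma or a discrete Morse argument; the delicate point is to describe the cover relations (rotations) that create or destroy the rightmost cherry precisely enough to pin down these fibers. A more combinatorial alternative is to invoke the EL-shellability of the Tamari lattice (Björner--Wachs), reducing $\mu(c_m,s)$ to a signed count of falling maximal chains in $[c_m,s]$ and showing that exactly one such chain exists in cases (a)--(b) and none in case (c); or to use Weisner's theorem inside $[c_m,s]$, choosing in case (c) an element $a\neq c_m$ for which $s$ is the only $x$ with $x\vee a=s$.

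Granting the first identity, the formula for $\sD^{-1}$ follows by specialisation. Applying the first formula to the tree $\treeA\under t=\treeO\vee t$, whose left subtree is empty, cases (a)--(c) show that its coefficient $\mu(c_{n+1},\treeO\vee t)$ vanishes unless the rightmost vertex is a cherry, in which case it equals $-\mu(c_{n},\treeO\vee t'')$ where $t=t''\under\treeA$; iterating, the coefficient is non-zero exactly when the right spine can be completely peeled, that is, when $t$ is the right comb $d_n$, and then it equals $(-1)^{n}=\mu(c_{n+1},d_{n+1})$. Thus the right-hand side collapses to $\treeA+\sum_{q\geq 2}(-1)^{q-1}d_q=\tilde{\sD}$, which equals $\sD^{-1}$ by the inversion in the subgroup of right-comb series recorded before the proposition. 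This both yields the stated description of $\sD^{-1}$ and fixes the reading of the index in $\mu(c_n,\treeA\under t)$ as the Möbius function of $Y_{n+1}$.
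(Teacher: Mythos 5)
Your overall architecture is sound and genuinely different from the paper's: you reduce the proposition to three assertions (a)--(c) about the Tamari M\"obius function, observe that they force the M\"obius series $M$ to satisfy the recursion $\sE={\treeA}+\sE \over {\treeA}-\sE \under {\treeA}$ which, by the previous proposition, characterizes $\sE$, and then deduce the formula for $\sD^{-1}$ by iterated peeling of rightmost cherries (your remark that the index in $\mu(c_n,{\treeA}\under t)$ must be read in $Y_{n+1}$ is also a correct reading of the statement). Your treatment of (a), via the order ideal of trees with empty right subtree at the root, is fine. But there is a genuine gap, and you have located it yourself: the pair (b)--(c) is never proved. You list three candidate strategies --- suspension/contractibility of order complexes via a Quillen-type fiber lemma or discrete Morse theory, a falling-chain count via EL-shellability, and Weisner's theorem --- but carry none of them out, and the step you defer (describing which rotations create or destroy the rightmost cherry, precisely enough to control the fibers or the chains) is exactly where all the content lies: given the previous proposition, claims (a)+(b)+(c) are \emph{equivalent} to the first displayed identity. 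Checking the pentagon $Y_3$ does not discharge this; as written, the proposal is a reduction of the proposition to an unproven combinatorial assertion about $\mu$, not a proof.

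For comparison, the paper closes exactly this gap without establishing any new structural fact about the Tamari order, by quoting Bj\"orner and Wachs twice, and it runs the logic in the opposite direction (from $\sD^{-1}$ to $\sE$ rather than from $\sE$ to $\sD^{-1}$). First, \cite[Lemma 2.1]{BjWa2}: any interval $[c_{p+q},t' \over t'']$ is isomorphic to the product $[c_{p},t']\times[c_{q},t'']$; combined with multiplicativity of $\mu$ on products, the unique decomposition of every tree as an over-product of trees of the shape ${\treeA}\under t$, and the fact that such over-products are the operadic insertions into left combs, this shows the M\"obius series over all trees is the composition of $\sC$ with the M\"obius series over trees of the shape ${\treeA}\under t$, i.e.\ the two displayed identities are equivalent. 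Second, \cite[Corollary 9.5]{BjWa2}: $\mu(c_{n+1},{\treeA}\under t)$ vanishes unless ${\treeA}\under t$ is a right comb, where it equals $(-1)^{|t|}$; hence the second series is $\tilde{\sD}=\sD^{-1}$, and the first identity follows since $\sE=\sC\circ\sD^{-1}$ by the previous proposition. If you want to keep your inductive route, the cheapest repair is to derive (b)--(c) from the Bj\"orner--Wachs computation of $\mu$ on general Tamari intervals (or from their EL-labelling), rather than building the topological argument from scratch; otherwise your peeling computation is essentially a re-derivation, by hand, of the special case of \cite[Corollary 9.5]{BjWa2} that the paper simply cites.
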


\begin{proof}
  The first statement is equivalent to the second one, by using the
  following property of Tamari lattices, see for instance \cite[Lemma
  2.1]{BjWa2}: any interval $[c_{p+q},t' \over t'']$ in a Tamari lattice is
  isomorphic to the product of the intervals $[c_{p},t']$ and
  $[c_{q},t'']$ in smaller Tamari lattices. This shows that the
  generating series of M{\"o}bius number for all trees is the composition of
  $\sC$ by the generating series of M{\"o}bius number for trees of the
  shape $\treeA \under t$.

  The second statement can be deduced from the computation of the
  M{\"o}bius function of the Tamari lattice by Bj{\"o}rner and Wachs:
  see \cite[Corollary 9.5]{BjWa2}. In their notations, $c_n$ corresponds to
  the word $(0,\dots,0)$ and a tree of shape $\treeA \under t$ to a
  word $w_t$ beginning with the letter $n-1$. One gets that the
  M{\"o}bius number $\mu((0,\dots,0),w_t)$ is non zero if and only if
  the word $w_t$ is $(n-1,n-2,\dots,2,1)$, which corresponds to the
  tree $t=d_n$ in our notations.
\end{proof}

Remark: By using the projection morphism from $\Gd_Y$ to $\Gd$, one can see
that the sum of coefficients in $\sE$ of all trees of a fixed order
$n>1$ is zero, which also follows from the proposition.

\medskip

Let us then introduce the tree-expanded series $\sR={\treeA}+{\treeA}
\under \sA$ and $\sL={\treeA}+\sA \over {\treeA}$. In fact, $\sR$ is
the sum of all trees of the shape ${\treeA}\under t$ and $\sL$ is the
sum of all trees of the shape $t \over {\treeA}$. The series $\sR$
(resp. $\sL$) belongs to the subgroup of $\Gd_Y$ formed by series
indexed by trees of the shape ${\treeA}\under t$ only (resp. by trees
of the shape $t \over {\treeA}$ only).

\begin{proposition}
  One has
  \begin{align}
    \sR&={\treeA}+\sR \under \sL,\\
    \sL&={\treeA}+\sR \over \sL.
  \end{align}
  The composition $\sR \circ\sL^{-1}$ is equal to the suspension of $\sE$.
\end{proposition}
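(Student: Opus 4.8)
The plan is to prove the two functional equations for $\sR$ and $\sL$ first, as these carry the real content, and then to deduce the identity $\sR\circ\sL^{-1}=\tilde\sE$ by a short formal argument using the composition properties and the suspension. Throughout I would use the combinatorial descriptions given just before the statement, namely $\sR=\sum_{t\in Y}\treeO\vee t$ and $\sL=\sum_{t\in Y}t\vee\treeO$ (the sums of all trees whose left, resp. right, subtree at the root is trivial), together with $\sA=\sum_{t\in\Y}t$ and the elementary identities $\treeA\under t=\treeO\vee t$ and $t\over\treeA=t\vee\treeO$.

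For $\sR=\treeA+\sR\under\sL$, I would expand $\sR\under\sL=(\treeA+\treeA\under\sA)\under\sL$ and use associativity of $\under$ in the duplicial operad, $(x\under y)\under z=x\under(y\under z)$, to pull $\treeA\under$ in front: $\sR\under\sL=\treeA\under\sL+\treeA\under(\sA\under\sL)=\treeA\under(\sL+\sA\under\sL)$. The equation then reduces to the single identity $\sA=\sL+\sA\under\sL$, after which $\sR\under\sL=\treeA\under\sA=\sR-\treeA$. For $\sL=\treeA+\sR\over\sL$, the mirror manipulation using associativity of $\over$, $(x\over y)\over z=x\over(y\over z)$, together with $\sL=\treeA+\sA\over\treeA$, gives $\sR\over\sL=\sR\over\treeA+(\sR\over\sA)\over\treeA=(\sR+\sR\over\sA)\over\treeA$, so it reduces to $\sA=\sR+\sR\over\sA$, whence $\sR\over\sL=\sA\over\treeA=\sL-\treeA$.

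The combinatorial core, and the step I expect to be the main obstacle, is the identity $\sA=\sL+\sA\under\sL$; the companion identity $\sA=\sR+\sR\over\sA$ then follows by applying the left--right mirror involution $\phi$ on trees, which fixes $\sA$, sends $\sL$ to $\sR$, and turns an $\under$-product into an $\over$-product with the factors reversed ($\phi(u\under v)=\phi(v)\over\phi(u)$), thereby carrying $\sA=\sL+\sA\under\sL$ exactly to $\sA=\sR+\sR\over\sA$. Since $\sL=\sum_{w_r=\treeO}w$ already accounts for every tree with trivial right subtree, it suffices to show that $\sA\under\sL=\sum_{w_r\neq\treeO}w$, each tree with nontrivial right subtree occurring exactly once. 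I would prove this by the rightmost-leaf decomposition: a tree $w$ with $w_r\neq\treeO$ determines its deepest vertex $v$ on the right spine, whose subtree has the form $s\vee\treeO$; writing $u$ for the tree obtained from $w$ by pruning that subtree to a single leaf, one has $u\in\Y$, $s\in Y$, and $w=u\under(s\vee\treeO)$, and this decomposition is unique. As $\sA\under\sL=\sum_{u\in\Y,\,s\in Y}u\under(s\vee\treeO)$, this bijection produces precisely $\sum_{w_r\neq\treeO}w$, and adding $\sL$ gives all of $\sA$.

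Finally, to identify $\sR\circ\sL^{-1}$, set $F=\sR\circ\sL^{-1}$ and compose the two functional equations on the right by $\sL^{-1}$, using the distributivity of composition over $\over$ and over $\under$ and the fact that $\treeA$ is the unit for $\circ$, so $\treeA\circ\sL^{-1}=\sL^{-1}$. The second equation gives $\treeA=\sL^{-1}+F\over\treeA$, that is $\sL^{-1}=\treeA-F\over\treeA$, and the first gives $F=\sL^{-1}+F\under\treeA$; substituting yields $F=\treeA-F\over\treeA+F\under\treeA$. On the other hand, applying the suspension to the characterization $\sE=\treeA+\sE\over\treeA-\sE\under\treeA$ of the previous proposition, and noting that $\over$ and $\under$ add tree-orders so that suspension introduces one extra sign on each such product, one gets $\tilde\sE=\treeA-\tilde\sE\over\treeA+\tilde\sE\under\treeA$. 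Thus $F$ and $\tilde\sE$ satisfy the same recursion $X=\treeA-X\over\treeA+X\under\treeA$, which determines $X$ uniquely order by order, since its degree-$n$ part depends only on the degree-$(n-1)$ part through $X\over\treeA$ and $X\under\treeA$. Hence $F=\tilde\sE$, which is the asserted equality.
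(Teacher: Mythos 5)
Your proof is correct and follows essentially the paper's route: the two functional equations rest on the unique factorization of a tree obtained by cutting at the deepest vertex of its right (resp.\ left) spine --- exactly the ``unique maximal decomposition as an iterated $\under$ product'' that the paper invokes --- and your identification of $\sR\circ\sL^{-1}$ with $\tilde{\sE}$ (compose both equations on the right with $\sL^{-1}$, eliminate $\sL^{-1}$, and match the recursion $X=\treeA-X\over\treeA+X\under\treeA$ against the suspended recursion for $\sE$, which determines the series order by order) is the paper's argument verbatim. The only divergence is organizational: where the paper sketches the combinatorics directly for $\sR$ and $\sL$, you reduce them via associativity of $\over$ and $\under$ to the identities $\sA=\sL+\sA\under\sL$ and $\sA=\sR+\sR\over\sA$, proving the first by the spine decomposition and the second by the left--right mirror involution --- a legitimate and welcome filling-in of what the paper dismisses as ``standard''.
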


\begin{proof}
  The first two formulas follows directly from a standard
  combinatorial argument using a decomposition of trees. For instance,
  the first formula can be deduced from the existence of an unique
  maximal decomposition of a tree as an iterated $\under$ product. 

  By multiplication on the right by $\sL^{-1}$, one has
  \begin{align}
    \sR\circ\sL^{-1}&=\sL^{-1}+(\sR \circ\sL^{-1})\under {\treeA},\\
    {\treeA}&=\sL^{-1}+(\sR \circ\sL^{-1})\over {\treeA}.
  \end{align}  
  Hence by elimination of $\sL^{-1}$ one has
  \begin{equation}
 \sR\circ\sL^{-1}={\treeA}+(\sR \circ\sL^{-1})\under {\treeA}-(\sR \circ\sL^{-1})\over {\treeA}.
  \end{equation}
  By definition of the suspension, the suspension of $\sE$ and $\sR\circ
  \sL^{-1}$ satisfy the same induction, hence they are equal.
\end{proof}

Remark: one can also deduce from this a similar description of
$\sL^{-1}$.


\bibliographystyle{alpha}
\bibliography{bonn2007}

\end{document}